\documentclass[11pt]{article}

\setlength{\topmargin}{-1cm}
\setlength{\oddsidemargin}{0pt}
\setlength{\evensidemargin}{0pt}
\setlength{\textwidth}{16cm}
\setlength{\textheight}{22cm}

\usepackage{amsmath,amsfonts,amssymb,amsthm,bbm}
\usepackage{graphics,epsfig}
\usepackage{color}
\usepackage{array}

\pagestyle{myheadings}

\newtheorem{theorem}{Theorem}[section]
\newtheorem{proposition}{Proposition}[section]
\newtheorem{lemma}{Lemma}[section]
\newtheorem{corollary}{Corollary}[section]

\numberwithin{equation}{section}

\allowdisplaybreaks

\def\qed{\hfill$\Box$\medskip}

\begin{document}

\bigskip
\noindent{
\begin{center}
\Large\bf Tail asymptotic of the stationary distribution for the state dependent  (1,R)-reflecting random walk: near critical\footnote{The project is partially supported by the National
Natural Science Foundation of China (Grant No. 11131003). }
\end{center}
}

\noindent{
\begin{center}Wenming Hong\footnote{School of Mathematical Sciences
\& Laboratory of Mathematics and Complex Systems, Beijing Normal
University, Beijing 100875, P.R. China. Email: wmhong@bnu.edu.cn}  \ \ \
 Ke Zhou\footnote{ School of Mathematical Sciences
\& Laboratory of Mathematics and Complex Systems, Beijing Normal
University, Beijing 100875, P.R. China. Email:zhouke@mail.bnu.edu.cn}
\end{center}
}

\vspace{0.1 true cm}

\begin{center}
\begin{minipage}[c]{12cm}
\begin{center}\textbf{Abstract}\end{center}
\bigskip

In this paper, we consider the $(1,R)$ state-dependent reflecting random walk (RW) on the half line, allowing the size of jumps to the right at  maximal   $R$ and to the left only 1.
We provide an explicit criterion for  positive recurrence and the explicit expression of the stationary distribution based on the intrinsic branching structure within the walk.
As an application, we obtain the  tail asymptotic for the stationary distribution in the ``near critical" situation.

\mbox{}\textbf{Keywords:}\quad random walk, multi-type branching process,  positive recurrence, stationary distribution, tail asymptotic.\\
\mbox{}\textbf{Mathematics Subject Classification}:  Primary 60K37; Secondary 60J85
\end{minipage}
\end{center}


\section{ Introduction and Main Results\label{s1}}
\subsection{The background and motivation}
We consider the $(1,R)$-reflecting random walk on the half line,  i.e.,  a Markov chain $ \{X_m\}_{m\geq 0}$ on $\mathbb{Z^{+}}=\{0, 1, 2, \ldots \}$ with $X_0=0$ and the transition probabilities $P_{ij}$ specified by
 for $i\geq 0$ ($q(0)=0$),
\begin{equation*}
P_{ij} =
\begin{cases}
r(i), & \mbox{for $j=i$}, \\
q(i), & \mbox{for $j=i-1$}, \\
p_{j-i}(i), & \mbox{for $i<j\leq i+R$}, \\
0, & \mbox{otherwise,}
\end{cases}
\end{equation*}
 where $r(i)+q(i)+p_{1}(i)+p_{2}(i)+\cdots +p_{R}(i)=1$, $0<q(i)<1$, for $i\geq 1$, and $r(i)\geq 0, p_{1}(i), p_{2}(i), \cdots, p_{R}(i)\geq 0$. Obviously, this Markov chain  is irreducible. It can also be written as the transition matrix  (for simplicity, $R=2$),

\begin{equation*}
  \left(
             \begin{array}{ccccccc}
               r(0) & p_{1}(0) & p_{2}(0)\\
               q(1) & r(1) & p_{1}(1) & p_{2}(1)\\
               & q(2) & r(2) & p_{1}(2) & p_{2}(2)\\
               &  & q(3) & r(3) & p_{1}(3) & p_{2}(3)\\
               & & \ddots & \ddots & \ddots & \ddots & \\
             \end{array}
           \right).
\end{equation*}
in which all unspecified entries are zero.

For simplicity, we will restrict ourselves to consider $R=2$, and we write the transition probability at position $i$ as $P(i)=(q(i), r(i), p_{1}(i),p_{2}(i))$ (recall $q(0)=0$ and $0<q(i)<1$). At first, if the transition probability of the $(1,2)$-RW $(X_m)_{m\geq 0}$ is {\it state independent}, i.e.,~$ P(i)\equiv P=(q,r,p_{1},p_{2})$  for $i\geq 1$.
Let (see figure 1)
\begin{flalign*}
&D=\{(q,r,p_{1},p_{2}): p_{1}+p_{2}+q+r=1; ~~ p_{1}+2p_{2}< q\},\\
&L=\{(q, r,p_{1},p_{2}): p_{1}+p_{2}+q+r=1; ~~ p_{1}+2p_{2}= q\},
\end{flalign*}
it is easy to see that $(X_m)_{m\geq 0}$ is positive recurrent iff $ P(i)\equiv P=(q,r,p_{1},p_{2})\in D$ ($i\geq 1$) and null recurrent iff $ P(i)\equiv P=(q,r,p_{1},p_{2})\in L$ ($i\geq 1$).

How about the situation for the {\it state-dependent} $(1,R)$-RW $(X_m)_{m\geq 0}$ ? To our best knowledge only for  $R=1$, i.e., {\it state-dependent} $(1,1)$-RW,  the
criteria for the (positive) recurrence and the  expression for the
stationary distribution have been given explicitly (see for example
 ~\cite{K-M:57} and ~\cite{Kar}), and further tail asymptotic for the stationary distribution have been found in \cite{Durr} ( Page 294 and Page 305).

The aim of the present paper is to give an explicit criteria of the positive
recurrence and explicit expressions of  the stationary
distribution for the {\it state-dependent} $(1,R)$-RW,  which enable us to consider the tail asymptotic of the stationary distribution. Our method is based on the intrinsic branching structure within the random walk (\cite{hw}, \cite{hzh}).

\subsection{Main results}

\subsubsection{\it Criteria for the positive recurrence and stationary distribution}~
Define
\begin{flalign}\label{m}
& \alpha_{k}=p_{k}(0)+p_{k+1}(0)+\cdots+p_{R}(0),~\text{for}~~ 1\leq k\leq R,\nonumber\\
& \alpha=(\alpha_{1},\alpha_{2},\cdots,\alpha_{R}), ~  e_1=(1,0,\cdots,0),\nonumber\\
& \theta_{k}(i)=\frac{p_{k}(i)+p_{k+1}(i)+\cdots+ p_{R-1}(i)+p_{R}(i)}{q(i)},\nonumber\\
& {M_i} =\left(\begin{array}{ccccc}
\theta_{1}(i) & \theta_{2}(i) & \ldots & \theta_{R-1}(i) & \theta_{R}(i)\\
1 & 0 & \ldots & 0 & 0\\
\vdots & \vdots & \ddots & \vdots & \vdots\\
0 & 0 &\cdots & 0 & 0\\
0 & 0 & \cdots & 1 & 0
\end{array} \right)_{R\times R},
\end{flalign}


\begin{theorem}\label{sm}
 Assume  for $i\geq 0$,
\begin{equation}\label{at 0}
\begin{split}
&\mu(0)=1;~~~~\mu_{1}=\frac{1}{q(1)}\alpha e_{1}';\\
&\mu(i)=\frac{1}{q(i)}\alpha M_{1}M_{2}\cdots M_{i-1}e_{1}'.\end{split}\end{equation}

(i)~ $\mu(i)$ ($i\geq 0$) are the stationary measure of the {\it state-dependent} $(1,R)$-RW $(X_m)_{m\geq 0}$.

(ii) If $\sum_{i=0}^{\infty}\mu(i)<\infty$, then the walk $\{X_{m}\}_{m\geq 0}$ is positive recurrence. Furthermore the stationary  distribution can be expressed as \begin{equation}\label{sd}
\pi(i)=\frac{\mu(i)}{\sum_{i=0}^{\infty}\mu(i)}.\end{equation}
\end{theorem}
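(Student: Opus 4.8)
The plan is to prove (i) by verifying directly that $\mu$ satisfies the global balance equation $\mu=\mu P$, and then to deduce (ii) from standard Markov chain theory. The decisive structural feature is that the walk steps \emph{down} by exactly one, so that across the cut separating $\{0,1,\dots,j-1\}$ from $\{j,j+1,\dots\}$ the entire right-to-left probability flux is carried by the single transition $j\to j-1$. This is what makes the otherwise messy balance equations collapse to a tractable one-sided recursion, and it is the reason the hypothesis ``leftward jumps have size $1$'' cannot be dropped.

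First I would record the reduction of invariance to flux-balance across cuts. For an irreducible chain, $\mu=\mu P$ is equivalent to the vanishing, for every $j\ge1$, of the net current $\Phi_j$ across the cut $C_j$ between $\{0,\dots,j-1\}$ and $\{j,j+1,\dots\}$: writing the current $J_{i,i'}=\mu(i)P_{i,i'}-\mu(i')P_{i',i}$ and $\Phi_j=\sum_{i<j}\sum_{i'\ge j}J_{i,i'}$, one has the telescoping identity $\Phi_{j+1}-\Phi_j=\mu(j)-\sum_i\mu(i)P_{ij}$ together with $\Phi_0=0$, so $\Phi_j\equiv0$ forces every balance defect to vanish. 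Since leftward jumps have size one, the right-to-left flux across $C_j$ is $\mu(j)q(j)$, while the left-to-right flux counts, for each $m\ge1$, the walkers at $j-m$ jumping by at least $m$. Hence $\Phi_j=0$ reads
\begin{equation*}
\mu(j)q(j)=\sum_{m=1}^{\min(R,j)}\mu(j-m)\,\big(p_m(j-m)+\cdots+p_R(j-m)\big),\qquad j\ge1 .
\end{equation*}
Using $p_m(i)+\cdots+p_R(i)=q(i)\theta_m(i)$ and setting $a_i:=\mu(i)q(i)$, the terms with $j-m\ge1$ become $a_{j-m}\theta_m(j-m)$, whereas the boundary term $m=j$ (present only when $j\le R$) contributes $\mu(0)\big(p_j(0)+\cdots+p_R(0)\big)=\alpha_j$ because $\mu(0)=1$.

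Next I would show that $\mu(i)=q(i)^{-1}\alpha M_1\cdots M_{i-1}e_1'$ is precisely the solution of this recursion. Putting $v_j=\alpha M_1\cdots M_{j-1}$, so that $v_1=\alpha$, $v_{j+1}=v_jM_j$ and $a_j=(v_j)_1$, the companion form of $M_i$ yields the component relations $(v_{j+1})_k=(v_j)_1\theta_k(j)+(v_j)_{k+1}$ for $k<R$ and $(v_{j+1})_R=(v_j)_1\theta_R(j)$. Unrolling $(v_j)_2$ down the subdiagonal telescopes into $\sum_{m\ge2}a_{j-m+1}\theta_m(j-m+1)$, and the relation $a_{j+1}=(v_j)_1\theta_1(j)+(v_j)_2$ then reproduces exactly the flux-balance recursion above. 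The one point demanding care—and the step I expect to be the main obstacle—is the reflecting boundary at $0$, where $q(0)=0$ makes $\theta_m(0)$ undefined, so the boundary flux cannot be read off the generic formula. The role of the initial vector $\alpha$ is precisely to inject the correct boundary contribution $\alpha_j=p_j(0)+\cdots+p_R(0)$: I would check that when the telescoping bottoms out at $v_1=\alpha$ (which occurs for $2\le j\le R$) the surviving term is exactly $\alpha_j$, matching the $m=j$ term, and that the balance equation at $0$, namely $\mu(0)(1-r(0))=\mu(1)q(1)$, holds because $\alpha_1=1-r(0)$ and $\mu(0)=1$.

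Finally, for (ii): by construction $\mu\ge0$ with $\mu(0)=1$, so $\mu\not\equiv0$, and by (i) it is invariant for the irreducible chain $\{X_m\}_{m\ge0}$. If $\sum_i\mu(i)<\infty$, then $\pi(i)=\mu(i)/\sum_i\mu(i)$ is an invariant probability distribution; since an irreducible Markov chain admitting an invariant probability measure is positive recurrent and that measure is its unique stationary distribution, both assertions of (ii) follow immediately.
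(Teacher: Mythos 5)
Your proposal is correct, but it proves the theorem by a genuinely different route than the paper. The paper is probabilistic: it invokes Durrett's occupation-time criterion to identify $\mu(i)$ with $E^{0}N(i)$, the expected number of visits to $i$ before the first return to $0$, and then computes $E^{0}N(i)$ explicitly using the intrinsic multi-type branching structure of Hong--Wang, whose mean matrices $\widetilde M_k$ are brought to the companion form $M_k$ by a similarity transformation. You instead verify invariance directly: since leftward jumps have size one, $\mu=\mu P$ reduces (via the telescoping cut-flux identity $\Phi_{j+1}-\Phi_j=\mu(j)-\sum_i\mu(i)P_{ij}$, $\Phi_0=0$) to the skip-free balance relation $\mu(j)q(j)=\sum_{m=1}^{\min(R,j)}\mu(j-m)\bigl(p_m(j-m)+\cdots+p_R(j-m)\bigr)$, and the companion structure of $M_i$ unrolls exactly into this recursion, with the initial vector $\alpha$ supplying the boundary terms $\alpha_j$ at the reflecting origin where $\theta_m(0)$ is undefined; part (ii) then follows from the standard fact that an irreducible chain with an invariant probability distribution is positive recurrent with that distribution as its unique stationary law. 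I checked the component relations $(v_{j+1})_k=(v_j)_1\theta_k(j)+(v_j)_{k+1}$ ($k<R$), $(v_{j+1})_R=(v_j)_1\theta_R(j)$, and the bottoming-out at $v_1=\alpha$ for $j\le R$: they do reproduce the flux recursion, so the verification is complete. The trade-off: your argument is elementary, self-contained, and works verbatim for general $R$ (the paper only writes out $R=2$, remarking that the general case is notationally heavier), but it merely certifies a formula handed to you; the paper's branching-structure derivation explains where the formula comes from, gives $\mu(i)$ its probabilistic meaning as an expected occupation time, and develops machinery the authors reuse in the tail-asymptotic analysis.
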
 \qed

\noindent{\bf Remark}~ (\ref{at 0}) generalize the classical results for the {\it state-dependent} $(1,1)$-RW, see for example \cite{Durr} (Page 297). \qed

\subsubsection{\it Tail asymptotic of the stationary distribution: near critical}~
With the explicit expression of the stationary distribution (\ref{sd}) at hand, we can consider the tail asymptotic of the  distribution. Firstly, it is not difficult (but is also not obviously, as   \cite{hzz} for the $(L,1)$-RW) to see that the tail of $\pi(i) $ is geometric decay in the sense $\lim_{i\rightarrow \infty}\frac{\log\pi({i})}{i}= -c<0$ when the transition probability $ P(i)\to P=(q,r,p_{1},p_{2})\in D$. What we are now interested in is the ``near critical" situation: the transition probability $ P(i)$ from the interior of the ``positive recurrence area $D$" to $ P $ in the ``null recurrence area $L$" as $i\to\infty$. See figure 1  (In this figure, we assume r=0).
\begin{center}
\includegraphics[totalheight=70mm]{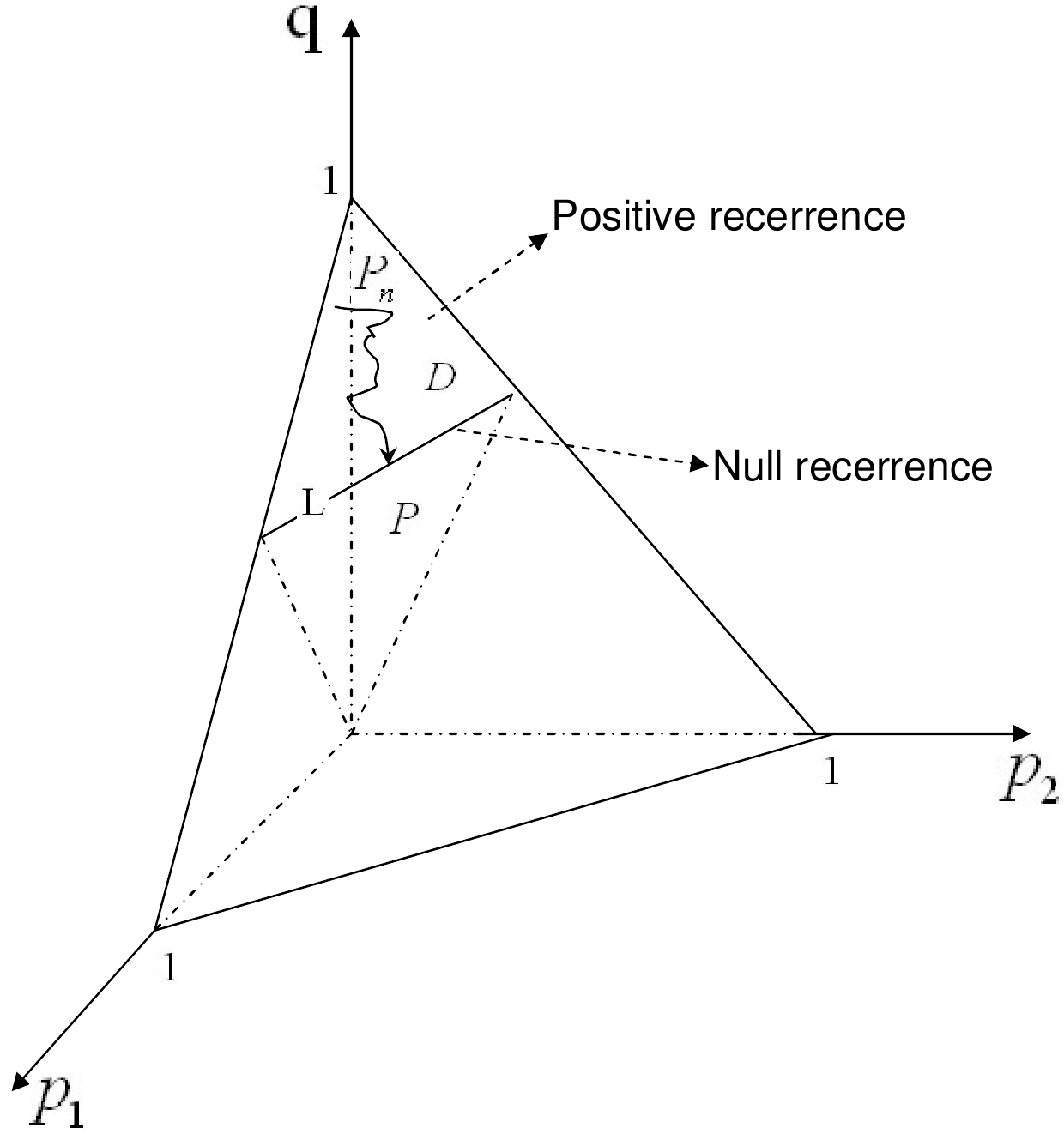}

figure 1: District of the transition probability
\end{center}
One of the interesting phenomena is that even all the $ P(i)\in D$, the ``positive recurrence area", the walk $X_{m}$ could be null recurrent.
To this end, we need to consider a finer manner of the $ P(i)$ goes to  $ P $ as $i\to\infty$.
Let $P=(p_{1},p_{2},r, q)\in L$,
 and for $i\geq 1$, $P(i)=(q(i), r(i), p_{1}(i),p_{2}(i))$ is given by
 \begin{equation}\label{tp}
\begin{split}
&p_{1}(i)= p_{1}-\varepsilon_{i}, ~p_{2}(i)= p_{2}-\varepsilon_{i}, ~q(i)= q+\varepsilon_{i};\\
&r(i)=1-p_{1}(i)-p_{2}(i)-q(i).
\end{split}
\end{equation}
    where  $\varepsilon_{i}>0$ and small enough, $\varepsilon_{i}\downarrow0$ as $i\rightarrow \infty.$  It is obvious that $P(i) \in D$, $P\in L$, and $P(i)\rightarrow P$ as $i\rightarrow \infty.$
\begin{theorem}\label{t1.3}
(a)~ If $\sum_{i=0}^{\infty}\varepsilon_{i}<\infty$,   $X_{m}$ is null recurrence.\\[8pt]
(b)~ if $\sum_{i=0}^{\infty}\varepsilon_{i}^{p}<\infty$ for some $1<p\leq2$,   $\kappa=\frac{4}{q}$.

(b1) When $\sum_{i=0}^{\infty}\prod_{k=0}^{i}e^{-\kappa\varepsilon_{k}}<\infty$, $X_{m}$ is positive recurrence, and
 \begin{equation*}\log\pi(i)\sim-\kappa\sum_{k=0}^{i}\varepsilon_{k},~~~\text{as} ~i\rightarrow \infty. \end{equation*}

(b2) When $\sum_{i=0}^{\infty}\prod_{k=0}^{i}e^{-\kappa\varepsilon_{k}}=\infty$, $X_{m}$ is null recurrence. \qed
\end{theorem}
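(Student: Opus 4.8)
The plan is to collapse the matrix product in Theorem~\ref{sm} to a scalar second order recursion and then read off its near critical asymptotics; once the growth rate of $\mu(i)$ is pinned down, all of (a), (b1), (b2) follow from the positive recurrence criterion in Theorem~\ref{sm}(ii). First I would record the level crossing (flux) identity underlying the stationary measure: balancing, in stationarity, the probability flow across the bond $(n,n+1)$ gives
\[
\mu(n+1)\,q(n+1)=\mu(n)\bigl(p_{1}(n)+p_{2}(n)\bigr)+\mu(n-1)\,p_{2}(n-1).
\]
Writing $a_{i}:=\alpha M_{1}M_{2}\cdots M_{i-1}e_{1}'$, so that $\mu(i)=a_{i}/q(i)$ by Theorem~\ref{sm}, this identity is exactly $a_{i+1}=\theta_{1}(i)\,a_{i}+\theta_{2}(i-1)\,a_{i-1}$, a second order linear recursion. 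Its limiting ($\varepsilon_{i}\equiv 0$) characteristic equation $\lambda^{2}-\theta_{1}\lambda-\theta_{2}=0$ has roots $1$ and $-p_{2}/q$; the Perron root is $1$ precisely because $P\in L$ forces $\theta_{1}+\theta_{2}=(p_{1}+2p_{2})/q=1$, while the second root has modulus $p_{2}/q<1$. Passing to the ratio $t_{i}:=a_{i+1}/a_{i}$ converts this into the first order recursion $t_{i}=\theta_{1}(i)+\theta_{2}(i-1)/t_{i-1}$, whose slowly moving fixed point tracks the perturbed Perron root.

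Next I would dispose of recurrence once and for all. For $i\ge 1$ the one step drift is $E[X_{m+1}-X_{m}\mid X_{m}=i]=p_{1}(i)+2p_{2}(i)-q(i)=-4\varepsilon_{i}\le 0$ by \eqref{tp}, so Foster's recurrence criterion with the Lyapunov function $V(x)=x$ (whose sublevel sets are finite and which tends to infinity) shows the walk is recurrent in all three regimes. By Theorem~\ref{sm}(ii) the remaining positive-versus-null dichotomy is then decided solely by the convergence of $\sum_{i}\mu(i)$, so everything reduces to the growth of $a_{i}$.

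The core step is the asymptotics of $t_{i}$. Setting $t_{i}=1+s_{i}$ and expanding $\theta_{1}(i)+\theta_{2}(i-1)/(1+s_{i-1})$ to first order yields a stable linear recursion of the form $s_{i}=-g_{i}-\theta_{2}\,s_{i-1}+O(s_{i-1}^{2})$, in which the forcing $g_{i}$ is a fixed positive multiple of $\varepsilon_{i}$, up to an $O(\varepsilon_{i}^{2})$ term and an $\varepsilon_{i-1}-\varepsilon_{i}$ difference that telescopes. Since $|\theta_{2}|=p_{2}/q<1$, summing this recursion and using $\log t_{i}=s_{i}+O(s_{i}^{2})$ gives $\log a_{i}=\sum_{k\le i}\log t_{k}=-\kappa\sum_{k\le i}\varepsilon_{k}+O(1)$, where $\kappa$ is the net first order rate of the statement (the forcing coefficient combined with the bounded contribution of the stable second root). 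The hypothesis $\sum_{i}\varepsilon_{i}^{p}<\infty$ with $1<p\le 2$ is used here and only here: it forces $\sum_{i}s_{i}^{2}=O\bigl(\sum_{i}\varepsilon_{i}^{2}\bigr)=O\bigl(\sum_{i}\varepsilon_{i}^{p}\bigr)$ and the telescoped differences to be $O(1)$, so the displayed error is genuinely bounded. Since $\mu(i)=a_{i}/q(i)$ and $q(i)\to q$, this reads $\log\mu(i)=-\kappa\sum_{k\le i}\varepsilon_{k}+O(1)$, i.e. $\mu(i)$ is comparable to $\prod_{k\le i}e^{-\kappa\varepsilon_{k}}$.

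The three conclusions then drop out. In (a), $\sum_{i}\varepsilon_{i}<\infty$ makes $\sum_{i}|s_{i}|<\infty$, so $a_{i}$ converges to a strictly positive constant, $\mu(i)\to c/q>0$, whence $\sum_{i}\mu(i)=\infty$ and the walk is null recurrent. In regime (b), the comparison $\mu(i)\asymp\prod_{k\le i}e^{-\kappa\varepsilon_{k}}$ makes $\sum_{i}\mu(i)$ converge or diverge together with $\sum_{i}\prod_{k\le i}e^{-\kappa\varepsilon_{k}}$, giving positive recurrence in (b1) and null recurrence in (b2) via Theorem~\ref{sm}(ii); moreover in (b1) the finiteness of $\sum_{i}\prod_{k\le i}e^{-\kappa\varepsilon_{k}}$ forces $\sum_{i}\varepsilon_{i}=\infty$, so the bounded error is negligible and $\log\pi(i)=\log\mu(i)-\log\sum_{k}\mu(k)\sim-\kappa\sum_{k\le i}\varepsilon_{k}$ by \eqref{sd}. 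The main obstacle is the third step: turning the adiabatic heuristic into a rigorous estimate with only $L^{p}$ ($p\le 2$) control on the perturbation. One must keep $t_{i}$ uniformly bounded away from $0$ so that the linearization and $\log t_{i}$ are legitimate, show that the stable root $-p_{2}/q$ contributes only a bounded, non rate changing correction rather than accumulating, and verify the summability of all quadratic and difference remainders; the borderline exponent $p=2$ is exactly what makes these remainders summable, and this quantitative bookkeeping is the delicate heart of the argument.
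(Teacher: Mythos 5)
Your reduction is a genuinely different route from the paper's. The paper diagonalizes $M$ and invokes the discrete Levinson (Benzaid--Lutz) and Hartman--Wintner $l^p$-perturbation theorems for the system $y_{n+1}=[M'+R_n']y_n$ (its Lemmas \ref{l5.2} and \ref{l5.3}); you instead collapse $\mu(i)=\frac{1}{q(i)}\alpha M_1\cdots M_{i-1}e_1'$ to the scalar recursion $a_{i+1}=\theta_1(i)a_i+\theta_2(i-1)a_{i-1}$ --- which is correct: it is exactly the first-component recursion of $w_i=M_i'w_{i-1}$, equivalently the stationary flux balance across the bond $(i,i+1)$ --- and then linearize the ratio $t_i=a_{i+1}/a_i$ around the Perron root $1$. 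Your drift/Foster argument for recurrence ($E[X_{m+1}-X_m\mid X_m=i]=-4\varepsilon_i\le 0$) is also an improvement on the paper, which dismisses recurrence as ``evident''. What the elementary route costs is the bookkeeping you flag at the end (keeping $t_i$ bounded away from $0$, summability of quadratic and telescoped remainders under $\sum\varepsilon_i^p<\infty$); you name these obstacles but do not carry them out, so as written this is a plan rather than a proof.

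More seriously, there is a genuine error at the one step where the constant $\kappa$ must be produced. You assert that the stable root $-\theta_2=-p_2/q$ ``contributes only a bounded, non rate changing correction'', so that $\log a_i=-\kappa\sum_{k\le i}\varepsilon_k+O(1)$ with $\kappa=4/q$ the forcing coefficient. That is false. Solving the linearized recursion $s_i=-\kappa\varepsilon_i-\theta_2 s_{i-1}+(\text{summable})$ gives $s_i=-\sum_{j\le i}(-\theta_2)^{i-j}\kappa\varepsilon_j+\cdots$, and summing over $i$,
\begin{equation*}
\sum_{k\le i}s_k=-\frac{\kappa}{1+\theta_2}\sum_{k\le i}\varepsilon_k+O(1),
\end{equation*}
so the geometric resummation against the stable root rescales the rate by $\frac{1}{1+\theta_2}$. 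Your method, executed correctly, therefore yields $\log\mu(i)\sim-\frac{4}{q+p_2}\sum_{k\le i}\varepsilon_k$, not $-\frac{4}{q}\sum_{k\le i}\varepsilon_k$; the two agree only when $p_2=0$, i.e.\ the $(1,1)$ case. The qualitative dichotomies in (a), (b1), (b2) survive with the corrected constant, but the claimed asymptotics for $\log\pi(i)$ and the thresholds in Corollary \ref{csd} do not come out with $\kappa=4/q$. (For what it is worth, the paper's own proof stumbles at the same spot: the $(1,1)$ entry of $\mathrm{diag}\,T^{-1}M_nT$ is $\frac{1-\kappa\varepsilon_n-\lambda_2}{1-\lambda_2}=1-\frac{\kappa\varepsilon_n}{1-\lambda_2}$, and the step asserting $\mu(i)\sim\frac{c}{(1-\lambda_2)q(i)}\prod_k(1-\kappa\varepsilon_k)$ mis-factors this product; since $\lambda_2=-\theta_2$, both methods, done carefully, give the same rate $\frac{\kappa}{1+\theta_2}=\frac{4}{q+p_2}$.) So the gap is not that your approach fails in principle, but that the rate-identification step as you wrote it is wrong, and repairing it proves the statement with a different constant from the one claimed.
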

As an application, we immediately have the following
\begin{corollary}\label{csd}
Suppose $\varepsilon_{i}\sim C i^{-\alpha}$ as $i\rightarrow \infty$, $C$ is a positive constant.\\
$Case~1:~\alpha>1$, $X_{m}$ is null recurrence.\\
$Case~2:~\frac{1}{2}<\alpha<1,$  $X_{m}$ is positive recurrence, and we have\begin{equation*}\log\pi(i)\sim-\frac{C\kappa}{1-\alpha}i^{1-\alpha},~~~\text{as} ~i\rightarrow \infty. \end{equation*}\\
$Case~3:~\alpha=1,$ If $C\kappa<1$, $X_{m}$ is null recurrence; if $C\kappa>1$, $X_{m}$ is positive recurrence, and\begin{equation*}\log\pi(i)\sim-C\kappa\log i,~~~\text{as} ~i\rightarrow \infty. \end{equation*}\qed
\end{corollary}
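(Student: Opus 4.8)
The plan is to deduce all three cases directly from Theorem~\ref{t1.3}; the corollary is an application in which the only genuine work is the elementary asymptotic analysis, under the single hypothesis $\varepsilon_i\sim C\,i^{-\alpha}$, of the partial sums $S_i:=\sum_{k=0}^{i}\varepsilon_k$ and of the two series that appear in the statement of the theorem. The organising tool I would use is the standard summation-of-equivalents principle: if $a_k\ge 0$, $b_k>0$, $a_k\sim b_k$ and $\sum_k b_k=\infty$, then $\sum_{k=1}^{i}a_k\sim\sum_{k=1}^{i}b_k$. Taking $b_k=C k^{-\alpha}$ and comparing with $\int^i x^{-\alpha}\,dx$ yields $S_i\sim\frac{C}{1-\alpha}\,i^{1-\alpha}$ when $\alpha<1$ and $S_i\sim C\log i$ when $\alpha=1$; these two asymptotics drive the entire argument.

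For Case~1 ($\alpha>1$) I would simply compare with the convergent $p$-series to obtain $\sum_i\varepsilon_i<\infty$, so that part~(a) of Theorem~\ref{t1.3} immediately gives null recurrence, with nothing further to verify. For Case~2 ($\tfrac12<\alpha<1$) I would first check the hypothesis of part~(b): since $\alpha>\tfrac12$ we have $1/\alpha<2$, so the interval $(1/\alpha,2]$ is nonempty, and for any $p$ in it one has $\alpha p>1$ and hence $\sum_i\varepsilon_i^{p}<\infty$ by comparison, with $\kappa=4/q$ as in the theorem. Then $\prod_{k=0}^{i}e^{-\kappa\varepsilon_k}=e^{-\kappa S_i}\sim e^{-\frac{C\kappa}{1-\alpha}i^{1-\alpha}}$, and because $0<1-\alpha<\tfrac12$ this is a stretched exponential with $\sum_i e^{-c\,i^{\beta}}<\infty$ for every $c>0$, $\beta>0$. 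Thus the series in (b1) converges, Theorem~\ref{t1.3}(b1) applies, and $\log\pi(i)\sim-\kappa S_i\sim-\frac{C\kappa}{1-\alpha}\,i^{1-\alpha}$, exactly the claimed estimate.

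For Case~3 ($\alpha=1$) the hypothesis of part~(b) holds by taking, say, $p=2$, since $\sum_i\varepsilon_i^{2}<\infty$. Here $\prod_{k=0}^{i}e^{-\kappa\varepsilon_k}=e^{-\kappa S_i}\sim i^{-C\kappa}$, so the series $\sum_i\prod_{k=0}^{i}e^{-\kappa\varepsilon_k}$ behaves like $\sum_i i^{-C\kappa}$, which converges iff $C\kappa>1$. Consequently Theorem~\ref{t1.3}(b1) gives positive recurrence together with $\log\pi(i)\sim-\kappa S_i\sim-C\kappa\log i$ when $C\kappa>1$, while Theorem~\ref{t1.3}(b2) gives null recurrence when $C\kappa<1$, matching the statement.

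I do not expect a deep obstacle, since the corollary is a direct specialisation of Theorem~\ref{t1.3}; the point that most needs care is that every relation above holds only up to asymptotic equivalence, so I must confirm that replacing $\varepsilon_k$ by $C k^{-\alpha}$ alters neither the convergence/divergence dichotomy in the hypotheses nor the leading order of $\log\pi(i)$. This is precisely what the summation-of-equivalents lemma secures, the extra robustness being that Theorem~\ref{t1.3} only asserts a logarithmic (hence $\sim$-level) estimate. Finally, the borderline value $C\kappa=1$ in Case~3, where $\sum_i i^{-1}=\infty$ forces null recurrence via (b2), is deliberately excluded from the statement, so it need not be treated.
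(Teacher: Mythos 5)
Your proposal is correct and follows essentially the same route as the paper's proof: each case is reduced to Theorem~\ref{t1.3} by checking the appropriate $\ell^p$ condition on $(\varepsilon_i)$ and the convergence or divergence of $\sum_i e^{-\kappa\sum_{k\le i}\varepsilon_k}$, using $\sum_{k=0}^{i}\varepsilon_k\sim\frac{C}{1-\alpha}i^{1-\alpha}$ (resp.\ $C\log i$ when $\alpha=1$). Your explicit summation-of-equivalents lemma and the explicit choice $p\in(1/\alpha,2]$ only make rigorous the steps the paper asserts without comment, so the two arguments are the same in substance.
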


\noindent{\bf Remark}~ Theorem \ref{t1.3} and Corollary \ref{csd} say that even all the $ P(i)\in D$, the walk $X_{m}$ could be null recurrent, which generalize the results for the {\it state-dependent} $(1,1)$-RW (\cite{Durr}, Page 294 and Page 305). \qed

\

We arrange the remainder of this paper as follows. In Section \ref{s3}, we will prove Theorem \ref{sm} after a brief review about the intrinsic branching structure within the walk, which is the basic tool to specify the stationary measure;  Theorem \ref{t1.3} and Corollary \ref{csd} will be proved in Section \ref{s4}, together with some preparations on the asymptotic solution of difference system.



\section{ Proof of Theorem \ref{sm} \label{s3}}

Let  $
T=\inf\{m>0: X_{m}=0\}
$, $N(i)=\sum_{m=0}^{T-1}1_{\{X_{m}=i\}}$ be the number of
visits to state $i$ by the chain before $T$, and $E^i$ is the
expectation when the walk starts at $X_0=i$.
Firstly,   recall a classical results on the (positive)
recurrence and the stationary distribution of a general Markov
chain $X_m$.

\begin{proposition} (Thoerem (4.3), \cite{Durr}) \label{p1.2}
For $k\geq 0$, $\mu(i)=E^{0}N(i)$ defines a stationary measure. If $\sum_{k=0}^{\infty}\mu(i)<\infty$, the random walk is positive recurrence, and the stationary distribution can be expressed as \begin{equation*}
\pi(i)=\frac{\mu(i)}{\sum_{i=0}^{\infty}\mu(i)}.\end{equation*}\qed
\end{proposition}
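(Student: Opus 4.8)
The plan is to establish the classical excursion (renewal) decomposition, with each return of the chain to state $0$ playing the role of a regeneration epoch. I would start from the time-series representation obtained via Tonelli's theorem (all summands being nonnegative),
\[
\mu(i)=E^{0}N(i)=E^{0}\sum_{m=0}^{T-1}1_{\{X_{m}=i\}}=\sum_{m=0}^{\infty}P^{0}(X_{m}=i,\,T>m).
\]
In particular $\mu(0)=1$: the $m=0$ term equals $P^{0}(T>0)=1$, while for $m\geq 1$ the event $\{T>m\}$ forces $X_{m}\neq 0$, so all remaining terms vanish. Throughout I use irreducibility together with recurrence, which guarantees $T<\infty$ $P^{0}$-almost surely and hence that each $\mu(i)$ is finite and $\mu$ is a genuine nonnegative measure.

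The heart of the argument is the invariance identity $\sum_{i}\mu(i)P_{ij}=\mu(j)$ for every $j$. The key observation is that $\{T>m\}=\{X_{1}\neq 0,\dots,X_{m}\neq 0\}$ is measurable with respect to $X_{0},\dots,X_{m}$, so the Markov property yields $P^{0}(X_{m}=i,\,T>m)\,P_{ij}=P^{0}(X_{m}=i,\,X_{m+1}=j,\,T>m)$. Summing over $i$ and over $m\geq 0$ and reindexing by $n=m+1$ gives
\[
\sum_{i}\mu(i)P_{ij}=\sum_{n=1}^{\infty}P^{0}(X_{n}=j,\,T\geq n).
\]
I would then split into two cases. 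For $j\neq 0$, on $\{X_{n}=j\}$ we have $X_{n}\neq 0$, hence $T\neq n$ and $\{T\geq n\}=\{T>n\}$; since the $n=0$ term of $\mu(j)$ vanishes (as $X_{0}=0\neq j$), the right-hand side is exactly $\mu(j)$. For $j=0$, on $\{X_{n}=0\}$ the first-return time satisfies $T\leq n$, so $\{X_{n}=0,\,T\geq n\}=\{T=n\}$ and the right-hand side collapses to $\sum_{n\geq 1}P^{0}(T=n)=P^{0}(T<\infty)=1=\mu(0)$, the last equality being where recurrence enters. This proves part (i).

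For part (ii), I would use the identity $\sum_{i}\mu(i)=E^{0}\sum_{i}N(i)=E^{0}T$, reflecting that one excursion away from $0$ visits states (with multiplicity) a total of $T$ times. Thus $\sum_{i}\mu(i)<\infty$ is precisely the condition $E^{0}T<\infty$, i.e.\ positive recurrence of $0$, which by irreducibility makes the whole chain positive recurrent; normalizing the finite stationary measure $\mu$ then yields the stationary probability distribution $\pi(i)=\mu(i)/\sum_{j}\mu(j)$, unique by irreducibility. The step requiring the most care is the boundary bookkeeping between the cases $j=0$ and $j\neq 0$: one must track exactly when $\{T\geq n\}$ may be replaced by $\{T>n\}$ and correctly account for the $n=0$ term, since an off-by-one slip there would break the stationarity identity. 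The remaining ingredients (Markov property, Tonelli, and $P^{0}(T<\infty)=1$ from recurrence) are otherwise routine.
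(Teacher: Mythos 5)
Your proof is correct, and there is nothing in the paper to compare it against: the paper states this proposition without proof, citing it directly as Theorem (4.3) of \cite{Durr}. Your excursion (cycle-trick) argument --- Tonelli to write $\mu(i)=\sum_{m\ge 0}P^{0}(X_m=i,\,T>m)$, the Markov property on $\{T>m\}$, the boundary split between $j=0$ (where $\{X_n=0,\,T\ge n\}=\{T=n\}$ and recurrence gives total mass $1=\mu(0)$) and $j\neq 0$, and finally $\sum_i\mu(i)=E^{0}T$ for the positive-recurrence criterion --- is precisely the classical proof found in that reference, so the proposal matches the paper's (implicit) approach.
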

We can calculate the $\mu(i)=E^{0}N(i)$ by the intrinsic branching structure within the $(1, R)$-RW as follows (the proof will delay at the end of this section),
\begin{proposition}\label{p1.1} We have $E^{0}N(0)=1$, and
\begin{equation*}
\begin{split}
E^{0}N(1)&=\frac{1}{q(i)}\alpha e_{1}'=\frac{p_{1}(0)+p_{2}(0)}{q(1)},\\
E^{0}N(i)&=\frac{1}{q(i)}\alpha M_{1}M_{2}\cdots M_{i-1}e_{1}',~~\text{for}~~ i>1,
\end{split}\end{equation*}
where $\alpha$, $M_i$ are given in (\ref{m}). \qed
\end{proposition}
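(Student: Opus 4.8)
The plan is to reduce the computation of $E^0N(i)$ to counting the downward steps of the walk. For $i\ge 1$ write $U_i=\sum_{m=0}^{T-1}1_{\{X_m=i,\,X_{m+1}=i-1\}}$ for the number of transitions $i\to i-1$ performed before time $T$. Two elementary facts drive the argument. The first is a Wald-type identity for the chain: for $j\ge 1$ and any state $j'$, since $\{m<T,\,X_m=j\}\in\mathcal F_m$, the Markov property gives $E^0\sum_{m<T}1_{\{X_m=j,\,X_{m+1}=j'\}}=P_{jj'}\,E^0N(j)$. Choosing $j'=j-1$ yields $E^0U_j=q(j)E^0N(j)$, hence $E^0N(i)=\frac1{q(i)}E^0U_i$; choosing $j'=j+k$ yields $E^0[\#\{j\to j+k\ \text{steps}\}]=p_k(j)E^0N(j)$. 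So it suffices to prove $E^0U_i=\alpha M_1\cdots M_{i-1}e_1'$, and the self-loops $r(i)>0$ need no special treatment since they are already absorbed into the visit counts $N(j)$.

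The second fact is a pathwise conservation law. Because the excursion begins and ends at $0$, which lies below the edge separating levels $i-1$ and $i$, the number of downward crossings of that edge equals the number of upward crossings. The downward crossings are exactly the $i\to i-1$ steps, so their count is $U_i$; the upward crossings are the jumps from a state $\le i-1$ to a state $\ge i$, which for $R=2$ are the jumps $i-1\to i$, $i-1\to i+1$, and $i-2\to i$.

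Combining the two facts produces the matrix recursion. For $R=2$ introduce the row vector $w_n=\bigl(E^0U_n,\ E^0[\#\{n-1\to n+1\ \text{steps}\}]\bigr)$. Decomposing the upward crossings of the edge $(n,n+1)$ by their originating level and using the identities of the first paragraph, the term $\theta_1(n)E^0U_n=\frac{p_1(n)+p_2(n)}{q(n)}E^0U_n$ collects all up-jumps launched from level $n$, while the size-$2$ jumps launched from level $n-1$ (already recorded in the second coordinate of $w_n$) are carried one level higher; this is exactly $w_{n+1}=w_nM_n$, the sub-diagonal $1$ of $M_n$ performing the shift. For general $R$ the same bookkeeping of ``pending overshoots'' yields the full companion matrix $M_n$. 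Unrolling gives $w_n=w_1M_1\cdots M_{n-1}$, so that $E^0U_n=w_ne_1'$.

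It remains to identify the initial vector at the reflecting boundary, where $q(0)=0$ forces separate handling. In a single excursion the walk leaves $0$ at most once (a self-loop at $0$ ends the excursion immediately), so the expected number of up-jumps of size $\ge k$ emitted from $0$ is $p_k(0)+\cdots+p_R(0)=\alpha_k$; this fixes $w_1=\alpha$, equivalently $E^0U_1=\alpha e_1'=p_1(0)+p_2(0)$ together with the pending overshoot $p_2(0)$. Feeding $w_1=\alpha$ into the recursion gives $E^0U_i=\alpha M_1\cdots M_{i-1}e_1'$, and $E^0N(i)=\frac1{q(i)}E^0U_i$ is the claim, while $N(0)=1$ on every excursion gives $E^0N(0)=1$. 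I expect the main obstacle to be not any single computation but the rigorous organization of this intrinsic branching structure of \cite{hw},\cite{hzh}: one must check that, after taking expectations, the pathwise crossing decomposition closes into the linear recursion $w_{n+1}=w_nM_n$ with no term missed or double-counted, and that the boundary at $0$ is correctly encoded in $\alpha$.
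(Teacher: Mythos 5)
Your proof is correct, but it follows a genuinely different route from the paper's. The paper invokes the intrinsic branching structure of Hong and Wang (Theorem A in \cite{hw}): it records, for each level $k$, the vector $U_k$ of up-jumps of size $1$, up-jumps of size $2$, and self-loops, notes that $\{U_n\}$ is a $3$-type branching process, computes $E^0N(i)$ through products of the $3\times3$ mean matrices $\widetilde M_k$, and then performs an explicit similarity transformation (conjugation by a unipotent matrix) to collapse those $3\times3$ products into the $2\times2$ companion-matrix products $M_1\cdots M_{i-1}$ of (\ref{m}). You bypass the branching process entirely: your two ingredients --- the occupation identity $E^0[\#\{j\to j'\ \text{steps before}\ T\}]=P_{jj'}E^0N(j)$ (a clean consequence of $\{m<T,\,X_m=j\}\in\mathcal F_m$ and the Markov property) and the pathwise up/down crossing balance across each edge of an excursion --- produce the recursion $w_{n+1}=w_nM_n$ directly in the $2\times2$ form, and I checked the algebra: the crossing decomposition at the edge $(n,n+1)$ does close into exactly $w_nM_n$, and the boundary computation gives $w_1=\alpha$, so nothing is missed or double-counted. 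What each approach buys: the paper's route rests on a stronger structural result (the full distributional branching structure, reusable elsewhere, e.g.\ for hitting-time identities), but pays for it with the third type needed to track self-loops and the subsequent $3\times3\to2\times2$ reduction; your route is self-contained, shorter, handles $r(i)\geq 0$ with no extra bookkeeping (self-loops are absorbed into $N(j)$ by the occupation identity), and lands on the matrices $M_n$ of (\ref{m}) in one step, with a cleaner path to general $R$. One shared caveat worth stating explicitly if you write this up: your crossing balance holds pathwise only on $\{T<\infty\}$, so the argument (like the paper's, and like Proposition \ref{p1.2} it feeds into) presupposes recurrence of the walk; in the transient case the up-crossings exceed the down-crossings by the escape probability and the recursion would acquire a correction term.
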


\noindent {\it Proof of Theorem \ref{sm}.}~ With Proposition  \ref{p1.2}  and  \ref{p1.2}  at hand, Theorem \ref{sm} is immediately. \qed

\

What  we should do is to prove Proposition  \ref{p1.1}, our method is the intrinsic branching structure within the $(1, R)$-RW ~(\cite{hw}, 2009).

\

\noindent {\it Brief review for the intrinsic branching structure.}
The intrinsic branching structure within a random walk
{ has} been studied by many authors. For the $(1,1)$-RW,
Dwass~(\cite{D}, 1975) and Kesten {\textit{et al.}
(\cite{KKS}, 1975)} observed a Galton-Watson process with the geometric offspring distribution hidden in the
nearest random walk. The branching structure is a powerful tool in
the study of random walks in a random environment
(RWRE, for short). In ~\cite{KKS}, Kesten \textit{et
al.}, proved a stable law for the nearest RWRE by using this
branching structure. The key point is that the hitting time
 $T_{i}$ can be calculated accurately by the branching
structure.

However, if the random walk is allowed to  jump
even to a bounded range, referred to as the $(L,R)$-RW, the
situation will become much more complicated. A multi-type branching
process has been revealed by Hong \& Wang (\cite{hw}, 2009) for the $(L,1)$-RW, and a little bit late for the $(1,R)$-RW
(\cite{hzh}, 2010) by Hong \& Zhang. It must be emphasized that these two branching structures are not symmetric, instead
they are essentially different. Note that if
we assume $q_{2}(i)\equiv 0$, both branching structures degenerate
to the case of the $(1,1)$-RW.

The following discussion is based on $R=2$. The
general case can be similarly discussed, but the notation is much more
complicated. Assume that $X_{0}=0$ we can calculate $E^{0}N_{i}$ by
using the branching structure within the random walk~(\cite{hw},
2009). Note that we consider the reflected $(1,R)$-RW and  calculate $E^{0}N_{i}$ before first return the start position $0$, actually we use the branching structure for the $(R,1)$-RW
 by Hong \& Wang (\cite{hw}, 2009), and with a little modification because of considering the walk could be stay at each state $i$ (here $r(i)\geq 0$).

Recall that $T=\inf\{n>0,X_{n}=0\}$, define
\begin{flalign*}
    U^{m}_{k}&=\#\{0\leq j<T:X_{j}\leq k,~X_{j+1}=k+m\} \quad \text{
    for } k\geq 0, \; m=1,2.\\
    U^{3}_{k}&=\#\{0\leq j<T:~X_{j}=k,X_{j+1}=k\} \quad \text{
    for } k\geq 0,
\end{flalign*}
 Setting\begin{equation*}
    U_k=(U^{1}_{k},U^{2}_{k},U^{3}_{k}) \quad \text{ for } k\geq 0.
\end{equation*}
We then have the following property  {(with~a~little~ modification)}
\medskip

\noindent{\bf Theorem A} (Hong and Wang~\cite{hw})
{\it\noindent(1) The process $\{U_n\}_{n=0}^{\infty}$}
is a $3$-type branching process whose branching mechanism is given
by,
\begin{equation}\label{at 1}
\begin{split}
P(U_{0}&=(1,0,0))=p_{1}(0),\\
P(U_{0}&=(0,1,0))=p_{2}(0),\\
P(U_{0}&=(0,0,1))=r(0);
\end{split}\end{equation}
and  for $k\geq 0$
\begin{flalign*}
&P(U_{k+1}=(a,b,c)\big|U_{k}=e_1)=\frac{(a+b+c)!}{a!b!
c!}r(k)^{a}p_{1}(k)^{b}p_{2}(k)^{c}q(k),\\
&P(U_{k+1}=(a,1+b,c)\big|U_{k}=e_2)=\frac{(a+b+c)!}{a!b!
c!}r(k)^{a}p_{1}(k)^{b}p_{2}(k)^{c}q(k),\\
&P(U_{k+1}=(0,0,0)\big|U_{k}=e_3)=1.
\end{flalign*}

{\it (2) For the process $\{U_n\}_{n=0}^{\infty},$ let $\widetilde M_k$ be
the $3\times 3$ mean matrix whose $m$-th row is
$E(U_{k+1}|U_k=e_m),$ for $k\geq 0$. Then, one has that\begin{equation*}
\widetilde M_{k}= \left(
\begin{array}{ccc}
\frac{p_{1}(k)}{q(k)} & \frac{p_{2}(k)}{q(k)} & \frac{r(k)}{q(k)}\\
1+\frac{p_{1}(k)}{q(k)} & \frac{p_{2}(k)}{q(k)}& \frac{r(k)}{q(k)}\\
0 & 0 & 0
\end{array} \right), \quad k\geq 1.
\end{equation*}\qed
}
\medskip

Now we are at the position to prove Proposition \ref{p1.1}.

\noindent{\it Proof of Proposition \ref{p1.1} } It is not hard to
deduce the relationship between the random walk and the intrinsic
branching structure that   $E^{0}N(0)=1$, and for $i\geq 1,$
 $N(i)=U^{1}_{i-1}+|U_{i}|$ (where$|U_{i}|=U^{1}_{i}+U^{2}_{i}+U^{3}_{i}$).
\begin{equation*}
\begin{split}
E^{0}N(1)&=p_{1}(0)+E^{0}U_{0}\widetilde{M_1}(1,1,1)'\\
&=p_{1}(0)+E^{0}U_{0}(\frac{1}{q(1)}-1,\frac{1}{q(1)},0)'\\
&=p_{1}(0)+p_{1}(0)(\frac{1}{q(1)}-1)+p_{2}(0)\frac{1}{q(1)}\\
&=\frac{p_{1}(0)+p_{2}(0)}{q(1)}.
\end{split}
\end{equation*}For $i>1$, using the Markov property, we have
\begin{equation*}
  E^{0}(N(i)\big|U_{i-1},U_{i-2},...,U_0)=U^{1}_{i-1}+|U_{i-1} \widetilde{M_i}|.
\end{equation*}
As a consequence
\begin{equation*}
\begin{split}
  E^{0}N(i)&=E^{0}U_{i-1}e_{1}'+E^{0}U_{i-1}\widetilde{M_{i}}(1,1,1)'\\
  &=E^{0}U_{i-2}\widetilde{M}_{i-1}e_{1}'+E^{0}U_{i-2}\widetilde{M}_{i-1}\widetilde{M_{i}}(1,1,1)'\\
  &=E^{0}U_{0}\widetilde{M}_{1}\widetilde{M}_{2}\cdots\widetilde{M}_{i-1}e_{1}'+E^{0}U_{0}\widetilde{M}_{1}\widetilde{M}_{2}\cdots\widetilde{M}_{i-1}\widetilde{M_{i}}(1,1,1)'.
\end{split}
\end{equation*}
 By \eqref{at 1}, $E^{0}U_{0}=(p_{1}(0),p_{2}(0),r(0)):=\beta$,
\begin{equation}\label{bh}
\begin{split}
E^{0}N(i)&=\beta\widetilde{M}_{1}\widetilde{M}_{2}\cdots\widetilde{M}_{i-1}e_{1}'+\beta\widetilde{M}_{1}\widetilde{M}_{2}\cdots\widetilde{M}_{i-1}\widetilde{M_{i}}(1,1,1)'\\
&=\frac{1}{q(i)}\beta\widetilde{M}_{1}\widetilde{M}_{2}\cdots\widetilde{M}_{i-1}(1,1,0)'.
\end{split}
\end{equation}
Define\begin{equation*}
\widehat{M}_{i}=\left(
\begin{array}{ccc}
\frac{p_1(i)+p_2(i)}{q(i)} & \frac{p_2(i)}{q(i)} & \frac{r(i)}{q(i)}\\
1 & 0 & 0\\
0 & 0 & 0\\
\end{array} \right),
\end{equation*}
notice that\begin{equation*} \widetilde{M}_{k}= \left(
\begin{array}{ccc}
1 & 0 & 0\\
1 & 1 & 0\\
0 & 0 & 1
\end{array} \right)\cdot\widehat{M}_{k}\cdot\left(
\begin{array}{ccc}
1 & 0 & 0\\
1 & 1 & 0\\
0 & 0 & 1
\end{array} \right)^{-1}.
\end{equation*}
Substitute the above equation into ~\eqref{bh}, by some calculations
\begin{equation*}
\begin{split}
E^{0}N(i)&=\frac{1}{q(i)}\beta \left(
\begin{array}{ccc}
1 & 0 & 0\\
1 & 1 & 0\\
0 & 0 & 1
\end{array} \right) \widehat{M}_{1}\widehat{M}_{2}\cdots \widehat{M}_{i-1}\left(
\begin{array}{ccc}
1 & 0 & 0\\
1 & 1 & 0\\
0 & 0 & 1
\end{array} \right)^{-1}(1,1,0)'\\
&=\frac{1}{q(i)}\beta \left(
\begin{array}{ccc}
1 & 0 & 0\\
1 & 1 & 0\\
0 & 0 & 1
\end{array} \right)\left(\begin{array}{cc}M_{1} & 0\\ 0 & 0\end{array} \right)\left(\begin{array}{cc}M_{2} & 0\\ 0 & 0\end{array} \right)\cdots\left(\begin{array}{cc}M_{i-1} & 0\\ 0 & 0\end{array} \right)\left(
\begin{array}{ccc}
1 & 0 & 0\\
1 & 1 & 0\\
0 & 0 & 1
\end{array} \right)^{-1}(1,1,0)'\\
&=\frac{1}{q(i)}(p_{1}(0)+p_{2}(0),p_{2}(0)) M_{1}M_{2}\cdots
M_{i-1}(1,0)',
\end{split}
\end{equation*}
complete the proof. \qed

\section{ Proof of Theorem \ref{t1.3} \label{s4}}

\subsection{ The Asymptotic Solution of Difference system\label{s4}}
In this section, we introduce the asymptotic behavior of linear
difference system. Here, we just consider the second-order system.
\begin{equation}\label{f4.1} y_{k+1}=[\Lambda+R_{k}]y_{k} \quad
k\geq 0\end{equation}
 where $y_{k}\in \mathbb{R}^{2}$, $\Lambda=\text{diag}\{\lambda_{1},\lambda_{2}\}$, and $R_{n}$ is a small perturbation in a sense to be made precise. We assume that $|\lambda_{1}|>|\lambda_{2}|>0$, and $\|R_{n}\|=\sum_{i=1}^{n}\sum_{j=1}^{n}|r_{ij}|$.

The classical result for asymptotic analysis of solutions is to represent a fundamental matrix in the form
\begin{equation*}
Y_{k}=[I+o(1)]\prod_{i=0}^{k-1}\tilde{\Lambda}_{i}\end{equation*}
where $\tilde{\Lambda}(l)$ is an explicitly diagonal matrix whose main terms come from $\Lambda$.

If we consider the difference equations $
y_{k+1}=A_{k}y_{k}$. Use this asymptotic representation,    we can give a precise estimation of  the non-homogeneous matrix products $A_{n}A_{n-1}\cdots A_{k_{0}}$.

Here, we just give the case that $R_{n}$ in (\ref{f4.1}) is an $l^{1}$-perturbations and  $l^{p}$-perturbations with
$1<p\leq2$. If $p>2$, the result is more complicate.

\subsection{$l^{1}$-perturbations}
The fundamental theorem of Levinson (\cite{levin}, 1948) establish analogous results for perturbed systems of differential equations. Benzaid
and Lutz ~(\cite{BL}, 1987) give the discrete analogue for difference equations. This theorem consider the more general case when $\Lambda$ is depends on $k$, requiring a dichotomy condition on $\Lambda$ and a growth condition on the perturbation $R_{k}$.

\begin{proposition}(Theorem 2.2  Benzaid
and Lutz (P202))\label{pl1}
Consider $y_{k+1}=[\Lambda+R_{k}]y_{k}$, where $\Lambda=\text{diag}\{\lambda_{1},\lambda_{2}\cdots \lambda_{n}\}$, $\lambda_{i}\neq 0$ for all $1\leq i\leq n$, and $\sum_{k=k_{0}}^{\infty}\|R_{k}\|<\infty$. Then the system has a fundamental matrix satisfying, as $k\rightarrow\infty$
\begin{equation}\label{f4.2}
Y_{k}=[I+o(1)]\Lambda^{k}.
\end{equation}
\end{proposition}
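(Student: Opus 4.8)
The statement to prove is Proposition~\ref{pl1}, the Benzaid--Lutz discrete Levinson theorem: under an $l^{1}$-perturbation $\sum_{k\geq k_0}\|R_k\|<\infty$ of a constant diagonal matrix $\Lambda$ with nonzero distinct eigenvalues, the system $y_{k+1}=[\Lambda+R_k]y_k$ admits a fundamental matrix $Y_k=[I+o(1)]\Lambda^k$.

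\medskip

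\noindent{\it Plan of proof.}~ The plan is to seek the fundamental matrix columnwise: for each eigenvalue $\lambda_j$ I look for a solution $y_k^{(j)}=\lambda_j^{k}(e_j+z_k^{(j)})$ with $z_k^{(j)}\to 0$, where $e_j$ is the $j$-th standard basis vector. Substituting this ansatz into the recursion and cancelling the factor $\lambda_j^{k}$ converts the difference equation into a perturbed equation for the correction term $z_k$; the problem thus reduces to showing that this correction can be chosen to tend to zero under the summability hypothesis. First I would set up the variation-of-constants (summation-by-parts) formula for $z_k$, turning the recursion into a fixed-point equation of the schematic form
\begin{equation*}
z_k = \sum_{m} G(k,m)\,R_m\,(e_j+z_m),
\end{equation*}
where the discrete Green's kernel $G(k,m)$ is built from the diagonal semigroup $\Lambda^{k-m}$. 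The crucial structural point is the dichotomy afforded by the ordering $|\lambda_1|>\cdots>|\lambda_n|$: for the index $j$ one must propagate the $\lambda_i$-modes with $|\lambda_i|\ge|\lambda_j|$ forward (summing from $k$ to $\infty$) and the modes with $|\lambda_i|<|\lambda_j|$ backward (summing from $k_0$ to $k$), so that each geometric factor $(\lambda_i/\lambda_j)^{k-m}$ stays bounded along its direction of summation. This is exactly the choice that keeps the kernel $G(k,m)$ uniformly bounded.

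\medskip

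\noindent Next I would solve this fixed-point equation by a contraction/successive-approximation argument on the Banach space of bounded $\mathbb{C}^{n}$-valued sequences on $\{k\ge N\}$ with the sup-norm. Because the kernel is uniformly bounded, say by a constant $C$, the integral operator has norm controlled by $C\sum_{m\ge N}\|R_m\|$; choosing $N$ large enough that $C\sum_{m\ge N}\|R_m\|<\tfrac12$ makes the map a contraction, yielding a unique bounded solution $z^{(j)}$. One then reads off that $z_k^{(j)}\to 0$ as $k\to\infty$ from the tail-summability of $\|R_m\|$: the dominant forward part of the sum is bounded by $\sum_{m\ge k}\|R_m\|\to 0$, and the backward part carries a genuinely contracting geometric factor $(\lambda_i/\lambda_j)^{k-m}$ with $|\lambda_i/\lambda_j|<1$, so by a standard discrete Abel/split-the-sum estimate it too vanishes. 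Assembling the $n$ columns $y_k^{(j)}=\lambda_j^{k}(e_j+z_k^{(j)})$ into a matrix gives $Y_k=[I+o(1)]\Lambda^{k}$; the prefactor $[I+o(1)]$ is invertible for large $k$, so these columns are linearly independent and $Y_k$ is indeed a fundamental matrix.

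\medskip

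\noindent The main obstacle is the bookkeeping of the dichotomy when eigenvalues have equal modulus or when $\Lambda$ is allowed to depend on $k$ (the fully general Benzaid--Lutz setting). In the version stated here, with $\Lambda$ constant and distinct nonzero $\lambda_i$, the moduli can still coincide, so the split of indices into a forward-summed and a backward-summed group must be made with respect to the ordering of $|\lambda_i|$ and the borderline case $|\lambda_i|=|\lambda_j|$ requires the forward direction and a slightly more careful uniform bound on the oscillatory factor $(\lambda_i/\lambda_j)^{k-m}$, which has modulus one. Handling that equal-modulus case so that the kernel remains bounded, and verifying that the resulting forward sum still tends to zero purely from $l^1$-summability of $R_m$ (rather than from geometric decay), is the delicate step; everything else is a routine contraction-mapping estimate.
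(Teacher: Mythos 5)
Your sketch is essentially correct, but the comparison here is lopsided: the paper offers no proof of this proposition at all. It is quoted as a known external result (Theorem 2.2 of Benzaid and Lutz, 1987) and used later as a black box to control the matrix products $M_{1}M_{2}\cdots M_{i-1}$ appearing in the stationary measure. What you have reconstructed is, in outline, the proof from the cited source itself, i.e.\ the discrete adaptation of Levinson's argument: the columnwise ansatz $y_{k}^{(j)}=\lambda_{j}^{k}(e_{j}+z_{k}^{(j)})$, the variation-of-constants kernel split according to the dichotomy (modes with $|\lambda_{i}|\geq|\lambda_{j}|$ summed forward over $m\geq k$, modes with $|\lambda_{i}|<|\lambda_{j}|$ summed backward), a contraction on the space of bounded sequences after discarding finitely many terms so that the operator norm $C\sum_{m\geq N}\|R_{m}\|$ drops below $\tfrac12$, and the $o(1)$ conclusion obtained from tail summability for the forward part plus the split-the-sum estimate on the geometrically decaying backward part. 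You also correctly isolate the one genuinely delicate point: modes with $|\lambda_{i}|=|\lambda_{j}|$ (including $i=j$) must be propagated forward, where boundedness of the modulus-one factor together with $\sum_{m\geq k}\|R_{m}\|\to 0$ suffices, no geometric decay being available. Two minor corrections: the proposition as stated assumes only $\lambda_{i}\neq 0$, not distinctness — your opening sentence adds that hypothesis gratuitously, and your own argument never needs it, since repeated eigenvalues are just another instance of the equal-modulus case; and, having solved the fixed-point equations, one should verify (routinely) that each $z^{(j)}$ indeed solves the original recursion and that the matrix $I+o(1)$ formed by the columns $e_{j}+z_{k}^{(j)}$ is invertible for large $k$, which you do note and which is all the asymptotic statement requires.
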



\subsection{$l^{p}$-perturbations with
$1<p\leq 2$}
While the discrete version of Levinson's theorem considered $l^{1}$-perturbations R in
(\ref{f4.1}), the discrete version of the theorem of Hartman-Wintner ~(\cite{HW}, 1955)was concerned with $l^{p}$ perturbations
for some $1<p\leq 2$. The proof is based on the so-called $Q$-transformation which was first introduced
for differential equations by Harris and Lutz ~(\cite{HL}, 1974) and later on modified for difference equations by Benzaid and Lutz ~(\cite{BL}, 1987). Those methods have been well-established.

\begin{proposition}(Corollary 3.4  Benzaid
and Lutz (P210))\label{plp}
Consider $y_{k+1}=[\Lambda+R_{k}]y_{k}$, where $\Lambda=\text{diag}\{\lambda_{1},\lambda_{2}\cdots \lambda_{n}\}$, $|\lambda_{1}|>|\lambda_{2}|>\cdots |\lambda_{n}|>0$, and $\sum_{k_{0}}^{\infty}\|R_{k}\|^{p}<\infty$ for some $1<p\leq 2$. Then the system has a fundamental matrix satisfying, as $k\rightarrow\infty$
\end{proposition}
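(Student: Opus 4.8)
The conclusion intended here is the discrete analogue of the Hartman--Wintner theorem: under the stated hypotheses, \eqref{f4.1} admits a fundamental matrix of the form
\begin{equation*}
Y_{k}=[I+o(1)]\prod_{j=k_{0}}^{k-1}\widetilde{\Lambda}_{j},\qquad \widetilde{\Lambda}_{j}=\mathrm{diag}\big(\lambda_{1}+r_{11}(j),\ldots,\lambda_{n}+r_{nn}(j)\big),
\end{equation*}
where $r_{ii}(j)$ are the diagonal entries of $R_{j}$. The plan is to reduce this $l^{p}$ situation to the $l^{1}$ situation already recorded in Proposition \ref{pl1}, by a single conditioning (``$Q$-'') transformation in the spirit of Harris--Lutz \cite{HL} and Benzaid--Lutz \cite{BL}. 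The idea is to trade the off-diagonal part of $R_{k}$, which is only $l^{p}$, for a genuinely $l^{1}$ perturbation, while the diagonal part of $R_{k}$ is \emph{retained} inside the modified diagonal factor $\widetilde{\Lambda}_{j}$ (it cannot simply be absorbed as an $l^{1}$ term, being only $l^{p}$).

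First I would seek a transformation $y_{k}=(I+Q_{k})z_{k}$ with $Q_{k}$ off-diagonal ($q_{ii}(k)\equiv 0$), chosen to annihilate the linear off-diagonal contribution of $R_{k}$. Substituting gives
\begin{equation*}
z_{k+1}=(I+Q_{k+1})^{-1}[\Lambda+R_{k}](I+Q_{k})z_{k},
\end{equation*}
and comparing off-diagonal entries at first order yields, for $i\neq j$, the discrete Sylvester recursion
\begin{equation*}
\lambda_{j}\,q_{ij}(k+1)-\lambda_{i}\,q_{ij}(k)=r_{ij}(k).
\end{equation*}
Because $|\lambda_{i}|\neq|\lambda_{j}|$, this scalar recursion can be solved by summing forward when $|\lambda_{i}|<|\lambda_{j}|$ and backward when $|\lambda_{i}|>|\lambda_{j}|$, producing the convolution estimate $|q_{ij}(k)|\le C\sum_{m}\rho^{|k-m|}|r_{ij}(m)|$ for some $\rho<1$. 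By Young's inequality this gives $\{\|Q_{k}\|\}\in l^{p}$, and in particular $Q_{k}\to 0$, so that $I+Q_{k}$ is invertible for $k$ large.

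With this $Q_{k}$ the transformed coefficient matrix becomes $\widetilde{\Lambda}_{k}+\widetilde{R}_{k}$: its diagonal at first order is exactly $\mathrm{diag}(R_{k})$ (since $\Lambda Q_{k}$ and $Q_{k+1}\Lambda$ are off-diagonal), which we absorb into $\widetilde{\Lambda}_{k}$, while the linear off-diagonal part cancels by construction. Everything left in $\widetilde{R}_{k}$ is quadratic, of size $O(\|Q_{k}\|\,\|R_{k}\|+\|Q_{k}\|^{2})$; since $p\le 2$ forces $R\in l^{2}$ (hence $Q\in l^{2}$), the Cauchy--Schwarz inequality makes these terms summable, i.e. $\{\|\widetilde{R}_{k}\|\}\in l^{1}$. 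One then checks that $\widetilde{\Lambda}_{k}$ still obeys Levinson's dichotomy condition, because the moduli $|\lambda_{i}+r_{ii}(k)|$ stay separated for large $k$ (the $|\lambda_{i}|$ being distinct and $r_{ii}(k)\to 0$); applying the $l^{1}$ theorem to $z_{k+1}=[\widetilde{\Lambda}_{k}+\widetilde{R}_{k}]z_{k}$ then yields a fundamental matrix $Z_{k}=[I+o(1)]\prod_{j=k_{0}}^{k-1}\widetilde{\Lambda}_{j}$. Transforming back, $Y_{k}=(I+Q_{k})Z_{k}=[I+o(1)]\prod_{j=k_{0}}^{k-1}\widetilde{\Lambda}_{j}$.

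The main obstacle is the $l^{1}$ bookkeeping of $\widetilde{R}_{k}$: the whole scheme hinges on the quadratic remainder being summable, which is precisely where $1<p\le 2$ enters, through $l^{p}\subseteq l^{2}$. For $p>2$ a single $Q$-transformation only improves $l^{p}$ to $l^{p/2}$, still short of $l^{1}$, so one must iterate the construction finitely many times with successively modified diagonals --- exactly the complication noted above. A secondary technical point is the uniform decay estimate for $q_{ij}(k)$, which relies essentially on the strict separation $|\lambda_{1}|>\cdots>|\lambda_{n}|>0$.
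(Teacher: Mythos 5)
Your proposal is correct and follows essentially the same route the paper relies on: Proposition \ref{plp} is not proved in the paper but quoted from Benzaid--Lutz (Corollary 3.4), whose proof is exactly the Harris--Lutz $Q$-transformation reduction to the $l^{1}$ (Levinson-type) case that you reconstruct --- off-diagonal $Q_{k}$ solving the Sylvester recursion, quadratic remainder made summable via $l^{p}\subseteq l^{2}$ and Cauchy--Schwarz, then the variable-diagonal dichotomy version of the $l^{1}$ theorem --- including the paper's own remarks that the $l^{1}$ result needs a dichotomy condition when the diagonal varies and that $p>2$ is more complicated. Since the paper offers only this citation, your sketch matches the cited argument and nothing further needs comparing.
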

\begin{equation}\label{f4.3}
Y_{k}=[I+o(1)]\prod_{i=0}^{k-1}[\Lambda+\text{diag} R_{i}]
\end{equation}

\subsection{From ``positive
recurrence area" to the boundary of null recurrence   \label{s6}}
In this section, we just consider when $R=2$, and assume that $p_{2}(i),p_{2}>0$. The key to prove Theorem \ref{t1.3} is to discuss when $P(n)\rightarrow P$, the asymptotic representation of $M_{1}M_{2}\cdots M_{n}$. To this end, we consider the following  difference system.
\begin{equation}\label{f5.1}
y_{n+1}=M_{n}'y_{n}=[M'+R_{n}']y_{n}\end{equation}where $R_{n}=M_{n}-M,$
\begin{equation*}
 M= \left(
 \begin{array}{cc}
\frac{p_1+p_2}{q} & \frac{p_2}{q}\\
1 & 0
\end{array} \right),\quad
M_{n}= \left(
 \begin{array}{cc}
\frac{p_{1}(n)+p_{2}(n)}{q(n)} & \frac{p_{2}(n)}{q(n)}\\
1 & 0
\end{array} \right).
\end{equation*}




It is easy to see that $1=\lambda_{1}>0>\lambda_{2}>-1$, where $\lambda_{1},\lambda_{2}$ are two eigenvalues of  $M$. So it can be expressed in the diagonal form\begin{equation*}T^{-1}MT=\text{diag}(\lambda_{1},\lambda_{2})\end{equation*}
where $T$ is the non-singular matrix \begin{equation*} T= \left(
 \begin{array}{cc}
1 & \lambda_{2}\\
1 & 1
\end{array} \right).\end{equation*}
Let $y_{n}=(T^{-1})'z_{n}$, and (\ref{f5.1}) becomes
\begin{equation}\label{f5.2}
z_{n+1}=[\text{diag}(\lambda_{1},\lambda_{2})+T'R_{n}'(T^{-1})']z_{n}.\end{equation}
{{\begin{lemma}\label{l5.1}
Under our condition, we have for some constant $K_{1},K_{2}$, \begin{equation*}K_{1}\varepsilon_{n}\leq\|T'R_{n}'(T^{-1})'\|\leq K_{2}\varepsilon_{n}, ~~\text{as}~~ \varepsilon_{n}\rightarrow 0.\end{equation*}
\end{lemma}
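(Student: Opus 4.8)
The plan is to reduce the claim to the elementary observation that the perturbation $R_n=M_n-M$ is, up to the scalar $\varepsilon_n$, asymptotically a \emph{fixed nonzero} matrix, and that conjugation by the constant invertible matrix $T'$ can distort norms only by bounded constant factors.

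First I would compute $R_n=M_n-M$ explicitly from the parametrization (\ref{tp}). Since the bottom rows of $M_n$ and $M$ both equal $(1,0)$, the second row of $R_n$ vanishes identically and only the top row survives. Substituting $p_1(n)=p_1-\varepsilon_n$, $p_2(n)=p_2-\varepsilon_n$, $q(n)=q+\varepsilon_n$ and clearing denominators gives
\[
R_n=\begin{pmatrix}\dfrac{-(2q+p_1+p_2)\,\varepsilon_n}{q(q+\varepsilon_n)} & \dfrac{-(q+p_2)\,\varepsilon_n}{q(q+\varepsilon_n)}\\[2mm] 0 & 0\end{pmatrix}.
\]
Dividing by $\varepsilon_n$ and letting $\varepsilon_n\to0$ yields $\varepsilon_n^{-1}R_n\to B$, where $B$ is the fixed matrix with top row $\big(-(2q+p_1+p_2)/q^2,\ -(q+p_2)/q^2\big)$ and zero bottom row. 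The standing assumption $p_2>0$ (together with $q>0$) guarantees that both entries are nonzero, so $B\neq0$; this is the key quantitative input for the lower bound.

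Next I would exploit that the map $\phi(X):=T'X'(T^{-1})'=T'X'(T')^{-1}$ is a fixed linear isomorphism of the space of $2\times2$ matrices, invertible because $T$ is ($\det T=1-\lambda_2\in(1,2)\ne0$ since $\lambda_2\in(-1,0)$). By continuity of $\phi$ and of the norm, $\varepsilon_n^{-1}T'R_n'(T^{-1})'=\phi(\varepsilon_n^{-1}R_n)\to\phi(B)$. Because $\phi$ is injective and $B\neq0$, the limit $\phi(B)$ is nonzero, so $K:=\|\phi(B)\|>0$. Hence $\varepsilon_n^{-1}\|T'R_n'(T^{-1})'\|\to K$, and taking for instance $K_1=K/2$, $K_2=2K$ gives the two-sided bound $K_1\varepsilon_n\le\|T'R_n'(T^{-1})'\|\le K_2\varepsilon_n$ for all $n$ with $\varepsilon_n$ sufficiently small.

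The only genuinely substantive point is verifying $B\neq0$, i.e.\ that no cancellation in the explicit computation of $R_n$ forces the leading $\varepsilon_n$-coefficient to vanish; everything afterward is soft, since conjugation by a fixed invertible matrix is a linear isomorphism and all norms on a finite-dimensional space are equivalent. I expect no real obstacle here: the upper bound holds because every entry of $R_n$ is $O(\varepsilon_n)$, while the lower bound is precisely where $B\neq0$ (guaranteed by $p_2>0$) is needed to prevent the conjugated perturbation from being $o(\varepsilon_n)$.
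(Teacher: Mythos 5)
Your proof is correct and takes essentially the same route as the paper: both arguments substitute the parametrization (\ref{tp}) into the top row of $R_n=M_n-M$ to see that $\|R_n\|$ is of exact order $\varepsilon_n$, and then transfer this to the conjugated matrix by a soft finite-dimensional fact --- the paper via the equivalence of the norm $\|T^{-1}\,\cdot\,T\|$ with $\|\cdot\|$ (citing Horn--Johnson), you via the observation that $X\mapsto T'X'(T')^{-1}$ is a fixed linear isomorphism carrying the nonzero limit $B$ of $\varepsilon_n^{-1}R_n$ to a nonzero limit. Your version is slightly more explicit than the paper's (which only asserts $\|R_n\|\sim C'\varepsilon_n$ asymptotically); note also that $B\neq 0$ already follows from $q>0$ alone, so the appeal to $p_2>0$ is not actually needed at that step.
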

\begin{proof}
For fixed $T$, by \cite{hor}, P295, Theorem~ 5.6.7. $\|T^{-1}\cdot T\|_{\diamond}=\|\cdot\|$ is also a norm of the matrix. Then by  the equivalence of the norm, there exists constants $c_{1},~c_{2}$,
\begin{equation*}c_{1}\|R_{n}\|\leq \|R_{n}\|_{\diamond} \leq c_{2}\|R_{n}\|.\end{equation*}
We can see
\begin{equation*}\|R_{n}\|=|\frac{p_{1}(n)+p_{2}(n)}{q(n)}-\frac{p_{1}+p_{2}}{q}|+|\frac{p_{2}(n)}{q(n)}-\frac{p_2}{q}|.\end{equation*}
Using $p_{1}(n)\sim p_{1}-C\varepsilon_{n}, ~p_{2}(n)\sim p_{2}-C\varepsilon_{n}, ~q(n)\sim q+C\varepsilon_{n}$, we have for some constant $C'$, $\|R_{n}\|\sim C'\varepsilon_{n}$. So there exist $K_{1},K_{2}$ satisfy \begin{equation*}K_{1}\varepsilon_{n}\leq\|T'R_{n}'(T^{-1})'\|\leq K_{2}\varepsilon_{n} \end{equation*}
\end{proof}
}}

Combine (\ref{f5.1}), (\ref{f5.2}), Lemma \ref{l5.1}  and the discuss above, we deduce from the Propositions \ref{pl1} and \ref{plp}  that,
\begin{lemma}\label{l5.2}
If $\sum_{n=0}^{\infty}\varepsilon_{n}<\infty$,
Then the system (\ref{f5.1}) has a fundamental matrix satisfying, as $k\rightarrow\infty$
\begin{equation}\label{f5.3}
Y_{k}=(T^{-1})'[I+o(1)]\text{diag} (1,\lambda_{2}^{k}).
\end{equation}
\end{lemma}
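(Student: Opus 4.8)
The goal of Lemma~\ref{l5.2} is to obtain the asymptotic form of a fundamental matrix for the system \eqref{f5.1} under the hypothesis $\sum_{n}\varepsilon_n<\infty$. The plan is to reduce \eqref{f5.1} to the already-diagonalized form \eqref{f5.2} via the change of variables $y_n=(T^{-1})'z_n$, and then to verify that the perturbation appearing in \eqref{f5.2}, namely $T'R_n'(T^{-1})'$, satisfies exactly the $l^1$ summability hypothesis required by Proposition~\ref{pl1} (the discrete Levinson theorem of Benzaid and Lutz). Once this is in place, Proposition~\ref{pl1} applies verbatim to the $z$-system and produces a fundamental matrix $Z_k=[I+o(1)]\,\mathrm{diag}(\lambda_1,\lambda_2)^k=[I+o(1)]\,\mathrm{diag}(1,\lambda_2^k)$, using that $\lambda_1=1$. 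Transforming back through $Y_k=(T^{-1})'Z_k$ then yields \eqref{f5.3}.

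First I would record the eigenstructure of $M$, which is already stated: $1=\lambda_1>0>\lambda_2>-1$, with diagonalizer $T$, so that $T^{-1}MT=\mathrm{diag}(\lambda_1,\lambda_2)$. Transposing \eqref{f5.1} and substituting $y_n=(T^{-1})'z_n$ gives \eqref{f5.2} with diagonal part $\mathrm{diag}(\lambda_1,\lambda_2)$ (since $T'(M')(T^{-1})'=(T^{-1}MT)'=\mathrm{diag}(\lambda_1,\lambda_2)$) and perturbation $\tilde R_n:=T'R_n'(T^{-1})'$. The second step is to check summability of $\|\tilde R_n\|$. By Lemma~\ref{l5.1} we have $\|\tilde R_n\|\le K_2\varepsilon_n$ for large $n$, so $\sum_n\|\tilde R_n\|\le K_2\sum_n\varepsilon_n+\text{(finite head)}<\infty$ under the hypothesis $\sum_n\varepsilon_n<\infty$. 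Thus the $l^1$ condition of Proposition~\ref{pl1} holds, and the eigenvalues $\lambda_1,\lambda_2$ are nonzero as required.

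The third step is the direct application of Proposition~\ref{pl1} to \eqref{f5.2}: it furnishes a fundamental matrix $Z_k=[I+o(1)]\,\Lambda^k$ with $\Lambda=\mathrm{diag}(\lambda_1,\lambda_2)$, i.e.\ $Z_k=[I+o(1)]\,\mathrm{diag}(1,\lambda_2^{k})$ because $\lambda_1^k=1$. Finally, undoing the change of variables, if $Z_k$ is a fundamental matrix for the $z$-system then $Y_k=(T^{-1})'Z_k$ is a fundamental matrix for \eqref{f5.1}, which gives precisely \eqref{f5.3}. It is worth confirming that multiplying a fundamental matrix on the left by the fixed invertible matrix $(T^{-1})'$ again yields a fundamental matrix (its columns remain linearly independent solutions), and that the $o(1)$ factor can be kept to the left of the diagonal term rather than absorbed into it; this is harmless since $(T^{-1})'[I+o(1)]=[I+o(1)](T^{-1})'$ up to conjugation by the constant matrix $(T^{-1})'$, and the statement \eqref{f5.3} already writes the $o(1)$ interior to $(T^{-1})'$.

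I do not expect a serious obstacle here, since the heavy lifting is done by the cited Proposition~\ref{pl1} and by Lemma~\ref{l5.1}. The only points requiring care are bookkeeping ones: transposing correctly so that the diagonal part of \eqref{f5.2} really is $\mathrm{diag}(\lambda_1,\lambda_2)$ and not its inverse or transpose, ensuring the norm bound from Lemma~\ref{l5.1} is stated for $\tilde R_n$ (as it is) so that no further estimate is needed, and tracking the placement of the constant transformation $(T^{-1})'$ through the back-substitution so that the final form matches \eqref{f5.3} exactly. The mild subtlety is that the $l^1$ hypothesis is the weakest of the two perturbation regimes discussed, so this lemma is genuinely the ``$\sum\varepsilon_n<\infty$'' case; the companion estimate under $\sum\varepsilon_n^p<\infty$ would instead invoke Proposition~\ref{plp} and produce the product form \eqref{f4.3}, but that is not needed for the present statement.
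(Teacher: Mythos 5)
Your proof is correct and follows essentially the same route as the paper's: invoke Lemma~\ref{l5.1} to get $l^1$-summability of the perturbation $T'R_n'(T^{-1})'$, apply Proposition~\ref{pl1} to the transformed system \eqref{f5.2} to obtain $Z_k=[I+o(1)]\,\mathrm{diag}(1,\lambda_2^k)$, and back-substitute $y_n=(T^{-1})'z_n$ to get \eqref{f5.3}. The extra bookkeeping you carry out (the transposition identity, the observation that a constant invertible left factor preserves fundamental matrices) is sound, and the closing remark about commuting $(T^{-1})'$ past $[I+o(1)]$ is unnecessary since the back-substitution already places the factors exactly as in \eqref{f5.3}.
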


\begin{proof}
By Lemma \ref{l5.1}, if $\sum_{n=0}^{\infty}\varepsilon_{n}<\infty$, we use Proposition \ref{pl1} to system (\ref{f5.2}), we have
\begin{equation}
Z_{k}=[I+o(1)]\text{diag}(1,\lambda_{2}^{k}).
\end{equation}
Substitute $y_{n}=(T^{-1})'z_{n}$ we obtain (\ref{f5.3}),   complete the proof.
\end{proof}

\begin{lemma}\label{l5.3}
If for some $p$ such that $1<p\leq 2$, $\sum_{n=0}^{\infty}\varepsilon_{n}^{p}<\infty$.  Then the system (\ref{f5.1}) has a fundamental matrix satisfying, as $k\rightarrow\infty$
\begin{equation}\label{f4.2}
Y_{k}=(T^{-1})'[I+o(1)]\prod_{i=0}^{k-1}[\text{diag} T^{-1}M_{i}T]
\end{equation}
\end{lemma}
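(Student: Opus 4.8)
The plan is to prove Lemma~\ref{l5.3} by applying Proposition~\ref{plp} to the transformed system~(\ref{f5.2}) in exactly the same way that Lemma~\ref{l5.2} applied Proposition~\ref{pl1}, the only change being that the summability hypothesis is now an $l^{p}$ condition rather than an $l^{1}$ condition. First I would verify that the hypotheses of Proposition~\ref{plp} are met for~(\ref{f5.2}). The diagonal part is $\Lambda=\text{diag}(\lambda_{1},\lambda_{2})$ with $1=\lambda_{1}>0>\lambda_{2}>-1$, so the spectral gap condition $|\lambda_{1}|>|\lambda_{2}|>0$ holds; this is precisely what is needed, and it is the point at which the $l^{p}$ theorem is more delicate than the $l^{1}$ one (Levinson's theorem needs only $\lambda_{i}\neq 0$, whereas Hartman--Wintner needs the strict ordering of moduli). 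The perturbation of~(\ref{f5.2}) is $R_{n}^{\diamond}:=T'R_{n}'(T^{-1})'$, and by Lemma~\ref{l5.1} we have $\|R_{n}^{\diamond}\|\le K_{2}\varepsilon_{n}$, whence $\sum_{n}\|R_{n}^{\diamond}\|^{p}\le K_{2}^{p}\sum_{n}\varepsilon_{n}^{p}<\infty$ under the standing assumption. Thus Proposition~\ref{plp} applies directly.

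Next I would read off the conclusion of Proposition~\ref{plp} for the $z$-system and then undo the change of variables. Proposition~\ref{plp} yields a fundamental matrix of~(\ref{f5.2}) of the form
\begin{equation*}
Z_{k}=[I+o(1)]\prod_{i=0}^{k-1}\bigl[\Lambda+\text{diag}\,R_{i}^{\diamond}\bigr],
\end{equation*}
where $\text{diag}\,R_{i}^{\diamond}$ denotes the diagonal part of $T'R_{i}'(T^{-1})'$. Since $\Lambda=T^{-1}MT$ and $R_{i}^{\diamond}=T^{-1}R_{i}T$ (recall $R_{i}=M_{i}-M$ and note that the transpose structure in~(\ref{f5.1})--(\ref{f5.2}) is handled by working throughout with the primed matrices, as in Lemma~\ref{l5.2}), the bracketed factor is $\Lambda+\text{diag}\,(T^{-1}R_{i}T)=\text{diag}\,(T^{-1}M_{i}T)$, because $\Lambda$ is already diagonal and equals $\text{diag}\,(T^{-1}MT)$. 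This is the key algebraic simplification that produces the clean form stated in~(\ref{f4.2}): the diagonal of $\Lambda+\text{diag}\,R_{i}^{\diamond}$ coincides with the diagonal of $T^{-1}M_{i}T$.

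Finally I would substitute back. Using the relation $y_{n}=(T^{-1})'z_{n}$ from the derivation of~(\ref{f5.2}), the fundamental matrix of the original system~(\ref{f5.1}) is $Y_{k}=(T^{-1})'Z_{k}$, which gives
\begin{equation*}
Y_{k}=(T^{-1})'[I+o(1)]\prod_{i=0}^{k-1}[\text{diag}\,T^{-1}M_{i}T],
\end{equation*}
exactly the asserted~(\ref{f4.2}). I would carry out the steps in this order: (i) check the spectral-gap and $l^{p}$-summability hypotheses of Proposition~\ref{plp}; (ii) invoke the proposition to get the asymptotic form of $Z_{k}$; (iii) identify $\Lambda+\text{diag}\,R_{i}^{\diamond}$ with $\text{diag}\,T^{-1}M_{i}T$; (iv) transform back via $T$.

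I expect the main obstacle to be step~(iii), the bookkeeping with transposes and the change of basis. One must keep careful track of whether the similarity acts as $T^{-1}(\cdot)T$ or its transpose, and confirm that taking the diagonal part commutes correctly with the conjugation so that the perturbed diagonal factor really does collapse to $\text{diag}\,T^{-1}M_{i}T$ rather than to something involving only the diagonal of $R_{i}$ in the original coordinates. The analytic content is entirely supplied by Proposition~\ref{plp} and by Lemma~\ref{l5.1} (which converts the $\varepsilon_{n}$ bound on $\|R_{n}\|$ into the required bound on $\|R_{n}^{\diamond}\|^{p}$), so no new estimates are needed; the proof is a direct specialization of the Lemma~\ref{l5.2} argument with $l^{1}$ replaced by $l^{p}$ and Proposition~\ref{pl1} replaced by Proposition~\ref{plp}.
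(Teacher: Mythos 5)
Your proposal is correct and takes essentially the same route as the paper: apply Proposition~\ref{plp} to the transformed system~(\ref{f5.2}) (hypotheses checked via Lemma~\ref{l5.1} and the gap $1=\lambda_{1}>|\lambda_{2}|>0$), collapse $\Lambda+\mathrm{diag}\,T'R_{i}'(T^{-1})'$ into $\mathrm{diag}\,T^{-1}M_{i}T$, and transform back through $y_{n}=(T^{-1})'z_{n}$. The paper's own proof is just a terser version of this (``the proof is the same as Lemma~\ref{l5.2}'' plus the diagonal identity), and your step~(iii), including the observation that taking the diagonal part is insensitive to the transpose, is exactly the computation the paper displays.
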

\begin{proof}
The proof is the same as Lemma 5.2,
\begin{equation*}\begin{split}
\text{diag}(\lambda_{1},\lambda_{2})+\text{diag}T'R_{n}'(T^{-1})'&=\text{diag}(\lambda_{1},\lambda_{2})+\text{diag}T'M_{n}'(T^{-1})'-\text{diag}T'M'(T^{-1})'\\
&=\text{diag}T'M_{n}'(T^{-1})'=\text{diag}T^{-1}M_{n}T
\end{split}
\end{equation*}
\end{proof}

\subsection{ Proof of Theorem \ref{t1.3}  \label{s7}}
 { {
\begin{proof} It is evident  that $\widetilde{Y}_i:=M_{i }'M_{i-1}'\cdots M_{1}'$ is a fundamental matrix of (\ref{f5.1}); on the other hand $Y_i$ in (\ref{f5.3}) and (\ref{f4.2}) is also the fundamental matrix of (\ref{f5.1}). So there exists a nonsingular matrix $\widetilde{C}$ such that $\widetilde{Y}_i=Y_{i}\cdot \widetilde{C}$.

(a) If $\sum_{n=0}^{\infty}\varepsilon_{n}<\infty$.

Combine Theorem
\ref{sm}, Lemma \ref{l5.2} and (\ref{f5.1}),
\begin{equation*}
\begin{split}
\mu(i)&=\frac{1}{q(i)}\alpha M_{1}M_{2}\cdots M_{i-1}e_{1}'=\frac{1}{q(i)}e_{1}M_{i-1}'M_{i-2}'\cdots M_{1}'\alpha' \\
&=\frac{1}{q(i)}e_{1}Y_{i-1}\widetilde{C}\alpha'\\
&=\frac{1}{q(i)}e_{1}(T^{-1})'[I+o(1)]\text{diag}(1,\lambda_{2}^{i-1})\widetilde{C}\alpha'.
\end{split}
\end{equation*}
Because $|\lambda_{2}|<1$, $\mu(i)$ has a positive limit as $i\rightarrow\infty$. So $\sum_{i=0}^{\infty}\mu(i)=\infty$, and $X_{n}$ is null recurrence (It is evident that $X_{n}$ is recurrent because all the transition probability $P_i$ is in the ``positive recurrent area $D$") .

(b) If $\sum_{n=0}^{\infty}\varepsilon_{n}=\infty,$ and for some $p$ such that $1<p\leq 2$, $\sum_{n=0}^{\infty}\varepsilon_{n}^{p}<\infty$. We can calculate $\text{diag} T^{-1}M_{i}T$,
\begin{equation*}
\begin{split}
\text{diag} T^{-1}M_{i}T
&= \frac{1}{1-\lambda_{2}}\left(
 \begin{array}{cc}
\frac{p_{1}(n)+2p_{2}(n)}{q(n)}-\lambda_{2} & 0\\
0 & \lambda_{2}(1-\frac{p_{1}(n)+2p_{2}(n)}{q(n)})-\frac{p_{2}(n)}{q(n)}
\end{array} \right)\\
&\sim\frac{1}{1-\lambda_{2}}\left(
 \begin{array}{cc}
1-\kappa\varepsilon_{n}-\lambda_{2} & 0\\
0 & -\lambda_{2}\kappa\varepsilon_{n}-\frac{p_{2}(n)}{q(n)}
\end{array} \right).
\end{split}
\end{equation*}
By Lemma \ref{l5.3}, use the same method, there exist an constant $c$, such that
\begin{equation*}
\mu(i)\sim\frac{c}{(1-\lambda_{2})q(i)}\prod_{k=0}^{i-1}(1-\kappa\varepsilon_{k}).
\end{equation*}
Using the fact that $\text{log}(1-\alpha)\sim -\alpha$ as $\alpha\rightarrow 0 $ we see that
\begin{equation*}
\log\prod_{k=0}^{i-1}(1-\kappa\varepsilon_{k})\sim -\sum_{k=0}^{i-1}\kappa\varepsilon_{i}~~\text{as}~k\rightarrow \infty,
\end{equation*}then
\begin{equation}\label{f5.7}
\mu(i)\sim\frac{c}{(1-\lambda_{2})q(i)}e^{-\sum_{k=0}^{i-1}\kappa\varepsilon_{k}}.
\end{equation}
Note that $\frac{c}{(1-\lambda_{2})q(i)}$ is a bounded sequence.  If $\sum_{i=0}^{\infty}\prod_{k=0}^{i}e^{-\kappa\varepsilon_{k}}<\infty$ ,by (\ref{f5.7}), $\sum_{i=0}^{\infty}\mu(i)<\infty$,   the process is positive recurrence. If $\sum_{i=0}^{\infty}\prod_{k=0}^{i}e^{-\kappa\varepsilon_{k}}=\infty$, by (\ref{f5.7}), $\sum_{i=0}^{\infty}\mu(i)=\infty$,   it is null recurrence.
when the process is positive recurrence, \begin{equation}\log\pi(i)=\log\frac{\mu(i)}{\sum_{i=0}^{\infty}\mu(i)}
\sim-\kappa\sum_{k=0}^{i}\varepsilon_{k}. \end{equation}
\end{proof}
}}

\subsection{ Proof of Corollary \ref{csd}   \label{s8}}
In this section, we consider the example to explain the boundary between null
 recurrence and positive recurrence. For the case   $R=1$, it can be found in (\cite{Durr}).

Recall that $\kappa=\frac{4}{q}$. We assume $p_{1}(0)=p_{1}-Cn^{-\alpha}, ~p_{2}(0)=p_{2}-Cn^{-\alpha}$, and for $n>0$, $p_{1}(n)=p_{1}-Cn^{-\alpha}, ~p_{2}(n)=p_{2}-Cn^{-\alpha}, ~q(n)=q+Cn^{-\alpha}$, where $p_{1}, p_{2}, q$ satisfy
$p_{1}+2p_{2}=q.$
$Case~1:~\alpha>1.$ We have  $\sum_{n=0}^{\infty}\varepsilon_{n}<\infty$. By Theorem \ref{t1.3} $(a)$, $X_{n}$ is null recurrence.\\
$Case~2:~\frac{1}{2}<\alpha<1.$ There exist $1<p\leq 2$ such that $\sum_{n=0}^{\infty}\varepsilon_{n}^{p}<\infty$. And \begin{equation*}\prod_{k=0}^{i}e^{-\kappa\varepsilon_{k}}\sim e^{-\frac{C\kappa}{1-\alpha}i^{1-\alpha}}.\end{equation*} So $\sum_{i=0}^{\infty}\prod_{k=0}^{i}e^{-\kappa\varepsilon_{k}}<\infty$. By Theorem \ref{t1.3} $(b)$, the process is positive recurrence, and we have\begin{equation*}\log\pi(i)\sim-\frac{C\kappa}{1-\alpha}i^{1-\alpha},~~~\text{as} ~i\rightarrow \infty. \end{equation*}\\
$Case~3:~\alpha=1.$ It is easy to see that $\sum_{n=0}^{\infty}\varepsilon_{n}^{2}<\infty$. And\begin{equation*}\prod_{k=0}^{i}e^{-\kappa\varepsilon_{k}}\sim e^{-C\kappa\log i}=i^{-C\kappa}.\end{equation*} So if $C\kappa<1$, $\sum_{i=0}^{\infty}\prod_{k=0}^{i}e^{-\kappa\varepsilon_{k}}=\infty$ $X_{n}$ is null recurrence; if $C\kappa>1$, $\sum_{i=0}^{\infty}\prod_{k=0}^{i}e^{-\kappa\varepsilon_{k}}<\infty$ , $X_{n}$ is positive recurrence, and\begin{equation*}\log\pi(i)\sim-C\kappa\log i,~~~\text{as} ~i\rightarrow \infty. \end{equation*} \qed


\end{document}